\theoremstyle{plain}
\newtheorem*{theorem}{Theorem}
\newtheorem*{lemma}{Lemma}
\title{On the McLaughlin--Rundell theorem}
\author{Namig J. Guliyev}
\address{Institute of Mathematics and Mechanics, 9 B.~Vahabzadeh str., AZ1141, Baku, Azerbaijan.}
\address{Center for Mathematics and its Applications, Khazar University, 41 Mahsati str., AZ1096, Baku, Azerbaijan.}
\email{njguliyev@gmail.com}
\subjclass[2010]{34A55, 34B07, 34B24, 34L40}
\keywords{one-dimensional Schr\"{o}dinger equation, Sturm--Liouville operator, inverse problems, uniqueness, boundary conditions dependent on the eigenvalue parameter}
\begin{document}
\begin{abstract}
  We give a one-sentence proof of McLaughlin and Rundell's inverse uniqueness theorem.
\end{abstract}
\maketitle

In their 1987 paper \cite{MR87}, McLaughlin and Rundell established a new kind of inverse uniqueness result. They considered the one-dimensional Schr\"{o}dinger equation
\begin{equation} \label{eq:SL}
  -y''(x) + q(x)y(x) = \lambda y(x)
\end{equation}
with the Dirichlet boundary condition at the left endpoint and proved that the eigenvalues of particular index for an infinite set of different boundary conditions at the right endpoint uniquely determine this equation.

Our aim in this note is to give a very short proof of this result. We will formulate our theorem for more general boundary conditions at the left endpoint, as this does not require any substantial changes. One could also consider more general boundary conditions at the right endpoint, but that would require some restrictions on them (see the remarks at the end).

As in the original proof, we also proceed by reducing the claim to the two-spectra uniqueness result. To this end, McLaughlin and Rundell introduce an auxiliary analytic function and use the identity theorem for analytic functions. We instead directly apply the identity theorem for meromorphic functions to the logarithmic derivative of a solution.

Denote by $\boldsymbol{\uplambda}_k(q, f, b)$, $k = 0, 1, 2, \dots$ the eigenvalues of the boundary value problem generated by~(\ref{eq:SL}) and the boundary conditions
\begin{equation} \label{eq:boundary}
  y'(0) = -f(\lambda) y(0), \qquad y'(\pi) = b y(\pi),
\end{equation}
where $q \in \mathscr{L}_1(0, \pi)$ is real-valued, $f$ is a rational Herglotz--Nevanlinna function, and $b \in \mathbb{R} \cup \{ \boldsymbol{\infty} \}$.
We use the symbol $\boldsymbol{\infty}$ to indicate the Dirichlet boundary condition at either endpoint. Let $\varphi(x, \lambda)$ be a solution of~(\ref{eq:SL}) satisfying the first boundary condition in~(\ref{eq:boundary}) (the fact that it is only defined up to a constant multiple by this condition is not important for us here because we only need its logarithmic derivative). Then $\boldsymbol{\uplambda}_k(q, f, \boldsymbol{\infty})$ (respectively, $\boldsymbol{\uplambda}_k(q, f, 0)$) are the poles (respectively, the zeros) of the meromorphic function $\varphi'(\pi, \lambda)/\varphi(\pi, \lambda)$, which is strictly monotone decreasing from $+\infty$ to $-\infty$ in every interval of the form $\left( \boldsymbol{\uplambda}_k(q, f, \boldsymbol{\infty}), \boldsymbol{\uplambda}_{k+1}(q, f, \boldsymbol{\infty}) \right)$, and the eigenvalues $\boldsymbol{\uplambda}_k(q, f, b)$ of~(\ref{eq:SL})-(\ref{eq:boundary}) are exactly the roots of the equation $\varphi'(\pi, \lambda)/\varphi(\pi, \lambda) = b$ \cite[the proof of Lemma 2.3]{G17}.

In order to state our two-spectra uniqueness result, consider for a moment the symmetric continuation of our boundary value problem to the interval $(0, 2\pi)$, i.e. the one obtained by setting $q(x) := q(2\pi-x)$ for $x \in (\pi, 2\pi)$ and replacing the second boundary condition in~(\ref{eq:boundary}) by $y'(2\pi) = f(\lambda) y(2\pi)$. Then the odd-indexed and even-indexed eigenvalues of this symmetric problem coincide with $\boldsymbol{\uplambda}_k(q, f, \boldsymbol{\infty})$ and $\boldsymbol{\uplambda}_k(q, f, 0)$, respectively, and the uniqueness part of \cite[Theorem 4.9]{G17} says that these eigenvalues uniquely determine its coefficients. Therefore we immediately have
\begin{lemma}
  If $\boldsymbol{\uplambda}_k(q_1, f_1, \boldsymbol{\infty}) = \boldsymbol{\uplambda}_k(q_2, f_2, \boldsymbol{\infty})$ and $\boldsymbol{\uplambda}_k(q_1, f_1, 0) = \boldsymbol{\uplambda}_k(q_2, f_2, 0)$ for $k = 0, 1, 2, \dots$, then $q_1(x) = q_2(x)$ a.e. on $(0, \pi)$ and $f_1 = f_2$.
\end{lemma}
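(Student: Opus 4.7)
The plan is to read off the conclusion directly from the two ingredients already assembled in the paragraph preceding the statement, so essentially no new work is needed. First, for each pair $(q_i, f_i)$, $i = 1, 2$, I would form the symmetric continuation described in the excerpt: extend the potential to $(0, 2\pi)$ by setting $q_i(x) := q_i(2\pi - x)$ for $x \in (\pi, 2\pi)$, and impose the boundary conditions $y'(0) = -f_i(\lambda) y(0)$ and $y'(2\pi) = f_i(\lambda) y(2\pi)$. Call the resulting boundary value problem $\mathrm{BVP}_i$ and denote its eigenvalues, listed in increasing order, by $\mu_n^{(i)}$.

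Next, I would invoke the identification stated just before the lemma: the odd-indexed eigenvalues $\mu_{2k+1}^{(i)}$ of $\mathrm{BVP}_i$ are exactly $\boldsymbol{\uplambda}_k(q_i, f_i, \boldsymbol{\infty})$, and the even-indexed ones $\mu_{2k}^{(i)}$ are exactly $\boldsymbol{\uplambda}_k(q_i, f_i, 0)$. The hypotheses of the lemma therefore say precisely that $\mathrm{BVP}_1$ and $\mathrm{BVP}_2$ have the same spectrum.

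Finally, I would apply the uniqueness part of \cite[Theorem 4.9]{G17} to this single shared spectrum; it asserts that the full eigenvalue sequence of a problem of this form determines its coefficients. The conclusion is that the extended potentials agree almost everywhere on $(0, 2\pi)$ and that $f_1 = f_2$. Restricting the potential equality to $(0, \pi)$ gives $q_1 = q_2$ a.e.\ there, as required.

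The only step where one might pause is the parity/index correspondence that underlies the symmetric-continuation trick: eigenfunctions of $\mathrm{BVP}_i$ split into those symmetric about $x = \pi$, which satisfy $y'(\pi) = 0$ and hence correspond to the zeros of $\varphi'(\pi, \lambda)/\varphi(\pi, \lambda)$, and those antisymmetric about $x = \pi$, which satisfy $y(\pi) = 0$ and correspond to its poles; an interlacing argument then pins down which family is odd-indexed and which is even-indexed. But since both this correspondence and the two-spectra uniqueness are quoted from \cite{G17}, there is in fact no obstacle, which is why the author writes ``we immediately have''.
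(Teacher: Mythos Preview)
Your proposal is correct and follows exactly the approach sketched in the paragraph preceding the lemma: form the symmetric continuation on $(0,2\pi)$, identify its odd- and even-indexed eigenvalues with the Dirichlet and Neumann spectra on $(0,\pi)$, and then apply the uniqueness part of \cite[Theorem 4.9]{G17}. Your added explanation of the parity correspondence via symmetric/antisymmetric eigenfunctions is a helpful gloss, but otherwise there is nothing to add.
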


With all these preparations, we can now prove our main result.

\begin{theorem}
  If $\boldsymbol{\uplambda}_k(q_1, f_1, b_j) = \boldsymbol{\uplambda}_k(q_2, f_2, b_j)$ for a fixed index $k \ge 1$ and an infinite number of distinct $b_j$, then $q_1(x) = q_2(x)$ a.e. on $(0, \pi)$ and $f_1 = f_2$.
\end{theorem}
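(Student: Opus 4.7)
The plan is to apply the identity theorem for meromorphic functions directly to the logarithmic derivatives $m_i(\lambda) := \varphi_i'(\pi, \lambda)/\varphi_i(\pi, \lambda)$, where $\varphi_i$ corresponds to $(q_i, f_i)$ for $i = 1, 2$. Once we show $m_1 \equiv m_2$, these two meromorphic functions must share all of their zeros and poles; but these are exactly $\boldsymbol{\uplambda}_k(q_i, f_i, 0)$ and $\boldsymbol{\uplambda}_k(q_i, f_i, \boldsymbol{\infty})$ respectively, so the Lemma closes the argument at once.

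To establish $m_1 \equiv m_2$, I would set $\mu_j := \boldsymbol{\uplambda}_k(q_1, f_1, b_j) = \boldsymbol{\uplambda}_k(q_2, f_2, b_j)$. By the characterization recalled just before the Lemma, $m_1(\mu_j) = m_2(\mu_j) = b_j$, so the meromorphic function $m_1 - m_2$ vanishes at every $\mu_j$. For each $i$, strict monotonicity of $m_i$ between consecutive poles confines $\mu_j$ to a fixed open interval between two consecutive poles of $m_i$, with the hypothesis $k \ge 1$ ensuring that this interval is bounded on both sides. Hence $\{\mu_j\}$ is an infinite bounded subset of $\mathbb{R}$ and must possess a finite real accumulation point $\mu_\infty$.

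The only subtle step is to note that $\mu_\infty$ cannot be a pole of $m_1 - m_2$, since a meromorphic function blows up in modulus near any of its poles and therefore cannot vanish arbitrarily close to one. Thus $m_1 - m_2$ has a non-isolated zero at a regular point, the identity theorem forces $m_1 \equiv m_2$, and the Lemma finishes the proof. I do not foresee any genuine technical obstacle here; the only real challenge is to phrase the argument crisply enough to match the one-sentence proof promised by the abstract.
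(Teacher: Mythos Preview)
Your proposal is correct and follows essentially the same route as the paper: both argue that $m_1-m_2$ has infinitely many zeros in the bounded interval $\left(\boldsymbol{\uplambda}_{k-1}(q_1,f_1,\boldsymbol{\infty}),\,\boldsymbol{\uplambda}_k(q_1,f_1,\boldsymbol{\infty})\right)$, invoke the identity theorem for meromorphic functions to conclude $m_1\equiv m_2$, and then apply the Lemma to the shared zeros and poles. Your version simply makes explicit the accumulation-point step and the observation that a pole cannot be a limit of zeros, details the paper leaves implicit in its one-sentence formulation.
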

\begin{proof}
  As the difference of the meromorphic functions $\varphi'_1(\pi, \lambda)/\varphi_1(\pi, \lambda)$ and $\varphi'_2(\pi, \lambda)/\varphi_2(\pi, \lambda)$ corresponding to these two problems equals zero at infinitely many points $\boldsymbol{\uplambda}_k(q_1, f_1, b_j)$ from the bounded interval $\left( \boldsymbol{\uplambda}_{k-1}(q_1, f_1, \boldsymbol{\infty}), \boldsymbol{\uplambda}_k(q_1, f_1, \boldsymbol{\infty}) \right)$, by the identity theorem for meromorphic functions these two functions---and hence their zeros and poles---coincide, and the claim immediately follows from the lemma.
\end{proof}

This theorem holds for the lowest index $k = 0$ too, if the set of possible $b_j$'s is bounded from above (cf.~\cite[Remark]{MR87}). Moreover, the theorem also holds in the case of eigenparameter-dependent boundary conditions with rational Herglotz--Nevanlinna functions $b_j(\lambda)$ at the right endpoint, under the assumption that the set of their poles is bounded from below. Finally, using \cite[Theorem 5.4]{G19a} or \cite[Theorem 4.8]{G20} instead of \cite[Theorem 4.9]{G17} above, it is straightforward to generalize this result to distributional potentials and to inverse square singularities at the left endpoint.


\begin{thebibliography}{9}

\bibitem{G17} N. J. Guliyev,
\emph{Essentially isospectral transformations and their applications},
Ann. Mat. Pura Appl. (4) \textbf{199} (2020), no. 4, 1621--1648.
\href{https://arxiv.org/abs/1708.07497}{arXiv:1708.07497}

\bibitem{G19a} N. J. Guliyev,
\emph{Schr\"{o}dinger operators with distributional potentials and boundary conditions dependent on the eigenvalue parameter},
J. Math. Phys. \textbf{60} (2019), no. 6, 063501, 23 pp.
\href{https://arxiv.org/abs/1806.10459}{arXiv:1806.10459}

\bibitem{G20} N. J. Guliyev,
\emph{Inverse square singularities and eigenparameter-dependent boundary conditions are two sides of the same coin},
Q. J. Math. \textbf{74} (2023), no. 3, 889--910.
\href{https://arxiv.org/abs/2001.00061}{arXiv:2001.00061}

\bibitem{MR87} J. R. McLaughlin and W. Rundell,
\emph{A uniqueness theorem for an inverse Sturm--Liouville problem},
J. Math. Phys. \textbf{28} (1987), no. 7, 1471--1472.

\end{thebibliography}
\end{document}